\newcommand{\baseRing}[1]{\ensuremath{\mathbb{#1}}}
\newcommand{\R}{\baseRing{R}}
\newcommand{\N}{\baseRing{N}}
\renewcommand{\phi}{\varphi}
\numberwithin{equation}{section}
\title{School Choice as a One-Sided Matching Problem:\\ Cardinal Utilities and Optimization\thanks{Aksoy, Azzam, Coppersmith and Karaali were partially supported by National Science Foundation Grant DMS-0755540. Karaali was partially supported by a Pomona College Hirsch Research Initiation Grant and a National Security Agency Young Investigator Award (NSA Grant $\#$H98230-11-1-0186). Zhao was partially supported by the Hutchcroft Fund of the Department of Mathematics and Statistics at Mount Holyoke College. Zhu was partially supported by a Mount Holyoke College Ellen P. Reese Fellowship.}}
\author{S. Aksoy\thanks{University of California, San Diego, La Jolla, CA, USA} \and
A. Azzam\thanks{University of California, Los Angeles, Los Angeles, CA, USA} \and
C. Coppersmith\thanks{Bryn Mawr College, Bryn Mawr, PA, USA} \and
J. Glass\thanks{University of North Texas, Denton, TX, USA} \and
G. Karaali\thanks{Pomona College, Claremont CA, USA} \and
X. Zhao\thanks{Mount Holyoke College, South Hadley, MA, USA} \and
X. Zhu\thanks{Mount Holyoke College, South Hadley, MA, USA}
}
\begin{document}

\maketitle

\begin{abstract}
The school choice problem concerns the design and implementation of matching mechanisms that produce school assignments for students within a given public school district. Previously considered criteria for evaluating proposed mechanisms such as stability, strategyproofness and Pareto efficiency do not always translate into desirable student assignments. In this note, we explore a class of one-sided, cardinal utility maximizing matching mechanisms focused exclusively on student preferences. We adapt a well-known combinatorial optimization technique (the Hungarian algorithm) as the kernel of this class of matching mechanisms. We find that, while such mechanisms can be adapted to meet desirable criteria not met by any previously employed mechanism in the school choice literature, they are not strategyproof. We discuss the practical implications and limitations of our approach at the end of the article.
\end{abstract}

\begin{keywords} 
assignment, matching, school choice, Hungarian algorithm
\end{keywords}

\begin{AMS}
90B80, 90C27, 91B14, 91B68
\end{AMS}

\pagestyle{myheadings}
\thispagestyle{plain}
\markboth{Aksoy {\it et al}.}{School Choice as a One-Sided Matching Problem}

\section{Introduction}
\label{S:Introduction}

School choice policies are processes by which families have some say in determining where their children go to school.  Since the late eighties such policies have been adopted by many school districts across the nation.  Before school choice, students were typically assigned to public schools according to proximity.  Since wealthy families have the means to move to areas with desirable or reputable schools, such families have always had \emph{de facto} school choice.  Children in families that could not afford such a privilege were left with no other option than to attend the closest school - whether or not the school was desirable and/or was a good fit.  Thus school choice has been celebrated as a successful tool giving more families the power to shape their children's education, regardless of socioeconomic background.

In many school districts where funding and experienced teachers are lacking, school quality is uneven, and often a small number of schools are strongly preferred over others. Since it is not possible to assign all students to their top choice school, the question of \emph{how} to assign students to schools is often regarded as the central issue in school choice. In order to safeguard parents who seek to have their children attend schools conveniently within walking distance, at which a sibling is enrolled, or those offering need-based programs, districts define and adhere to a handful of school {\it priorities} which encapsulate such constraints. Thus school choice can be viewed as a
two-sided matching problem. An extensive study of two-sided matching problems can be found in \cite{RoSo90}; a more recent historical overview is \cite{Ro08}.

Previous work on school choice as a matching problem evaluates assignments using the notions of stability, Pareto efficiency and strategyproofness. Though all worthy considerations, these do not necessarily suffice to promote the most desirable outcomes.
In the context of school choice, stability corresponds to preventing priority violations. A priority violation occurs when a student desires a school more than the school to which she was assigned, and has higher priority than a student assigned to her desired school. Preventing priority violations is desirable for a very pragmatic reason:
Students whose priorities are violated may have legitimate grounds for legal action. Even without legal recourse, it is often felt that students are ``entitled" to schools in which they have been prioritized.
However the focus on avoiding priority violations in current school choice mechanisms leads to documented inefficiencies.
See \cite{AbPaRo09}, \cite{ErEr08}, \cite{Ke10}, \cite{Ro82} for more on this potential tradeoff between stability and efficiency.

In this note, we explore a class of one-sided mechanisms that aim to best honor student preferences rather than focus on school priorities.\footnote{Two-sided matching problems where the preferences are one-sided have been considered in other contexts as well.  See for instance \cite{GGKKMM10}, a recent article on assigning papers to referees. In this regard we are not treading totally uncharted territory, but such an approach has not yet been attempted specifically for the SCP. In fact the most significant novelty in our approach is perhaps in devising mechanisms to maximize the total utility for students, without systematically considering the priority structures of the schools involved. Thus we propose, in this paper, ways to incorporate information about cardinal preferences into practically useful assignment mechanisms (cf.\ \S\ref{S:NewCriteria}).} In cities without well-defined or legally required priorities (e.g. those that use whole-city lotteries), such an approach might be considered by policy makers in an attempt to make a student-optimal matching. Even cities committed to respecting student priorities may find these ideas valuable as priorities may indeed be incorporated at an intermediate or a final stage, see the relevant discussion in \S\ref{SS:ImplementationMultMin}. On a more theoretical level, we believe that investigating the possible application of a well-known combinatorial optimization algorithm to the school choice problem is of value in itself.

These mechanisms work under a given choice of cardinal utility transformation - in other words, the mechanism designer cardinalizes ordinal preferences in a way that respects the ordering. After students are matched to schools, their total cardinal utility assigns a numerical ``cost" to each matching, and so we conceptualize the school choice problem as a ``cost-minimizing" assignment problem. We show how a well-known optimization algorithm - the Hungarian algorithm - can be adapted to find ``cost-minimizing" assignments with respect to a given choice of cardinal utility transformation. While there are infinitely many such cardinal utility transformations, we illustrate the application of our mechanism by considering two: one which assumes uniform utility gaps and another which weights ordinal preferences exponentially so that the student receiving their least preferred school receives as preferred an assignment as possible. We show how both transformations reflect different economic theories of fairness; however, we do not argue in favor of any particular cardinal utility transformation over another, leaving such considerations to the reader.

We summarize some relevant recent work on school choice in \S\S\ref{SS:LitReview}. In \S\S\ref{SS:Notation} we introduce the notation and standard terminology used throughout the rest of the paper and simultaneously describe our model. In \S\ref{S:NewCriteria} we define cardinal utility transformations (\S\S\ref{SS:CUT}) and introduce two evaluation criteria that correspond to distinct choices of cardinal utility transformations (\S\S\ref{SS:Index}, \S\S\ref{SS:Rank}). 
We introduce our mechanisms in \S\ref{S:HA}, first providing an elementary description of the standard algorithm  (\S\S\ref{SS:Description}) and then explaining how we adapt it to the school choice problem (\S\S\ref{SS:Modifications}). We study various properties of our mechanisms (\S\S\ref{SS:Properties}, \S\S\ref{SS:HASA}) and discuss some implementation issues (\S\S\ref{SS:ImplementationMultMin}).
\S\ref{S:Conclusion} concludes this note with a discussion of its implications and a view toward future work.


\subsection{Research background}
\label{SS:LitReview}

School district policy decisions have long provided active lines of inquiry for public policy designers, operations researchers, economists and education administrators. Much of the relevant work has focused on designing school district boundaries in order to optimize various measures. For a diverse yet representative selection of work in this vein, see \cite{BrKn05}, \cite{CaShGuWe04}, \cite{FeGu90}, \cite{FrKo73}.

In our work we focus on assignment policy as a mechanism design problem, which provides a natural framework to investigate means of implementing social goals (cf.\ \cite{Ma08}). In the current school choice literature, there has been much work surrounding three specific mechanisms. The first two were introduced in \cite{AbSo03} while the third was presented in \cite{Ke10}.
\begin{enumerate}
\item Student-Optimal Stable Matching Mechanism (SOSM)
\item Top Trading Cycles Mechanism (TTC)
\item Efficiency Adjusted Deferred Acceptance Mechanism (EADAM)
\end{enumerate}

SOSM adapts the famous Gale-Shapley Deferred Acceptance (DA) algorithm \cite{GaSh62} to the school choice problem. It is well-established as a stable and strategyproof mechanism that has already been implemented in several large urban school districts \cite{AbPaRo09}, \cite{AbPaRoSo06}. However, when applied to large-scale data SOSM may lead to some welfare losses \cite{Ke10}. TTC is an alternative mechanism which promotes efficiency as opposed to stability, and is also strategyproof. The basic algorithm is to create trading cycles alternating between students and schools and to allow efficient matchings.
EADAM is proposed in \cite{Ke10} as a way to alleviate some of the efficiency costs of stability by iteratively running SOSM and modifying the preferences of any interrupters (i.e., students who cause others to be rejected from a school which later on rejects them) such that the SOSM outcome is Pareto dominated. As any Pareto domination of SOSM will lead to priority violations (cf.\ \cite{GaSh62}), EADAM leads to at least one priority violation. We will not need the specific processes in our work.

Recent literature also examines various real-life mechanisms such as those from Boston \cite{APRS05}, Chicago \cite{CuJaLe06}, Milwaukee \cite{GrPeDu99}, \cite{Ro98}, and New York City \cite{APR05}.


\subsection{Notation, basic terms and our model}
\label{SS:Notation}

Let $I$ denote a nonempty set of students, and $S$ a nonempty set of schools. 
For all $s\in S$, we let $q_{s}$ denote the {\bf capacity} of $s$ and use the ordered tuple $Q = (q_s \vert s \in S)$ to encode all the capacities in a given problem involving the set $S$ of schools.

A {\bf preference profile} for a student $i \in I$, written ${P}_i$,
is a tuple $(S_{1},\dots,S_{n})$ where the $S_{j}$'s form a partition of $S$ and every element of $S_{j}$ is preferred to every element of $S_{k}$ if and only if $j<k$.
Define the {\bf ranking function} $\phi_i : S \rightarrow \N$  of a student $i \in I$ by letting $\phi_i(s)$ denote $i$'s {ranking} of $s \in S$. In other words $\phi_i(s) = j$ if $s \in S_j$. When each $S_{j}$ is singleton, we say that $i$'s preference profile is {\bf strict}, (in which case we can view $P_i$ as an $n$-vector).  If $s_{k},s_{l}\in S_{j}$ for some $j,k\neq l$, then we say that the student is {\bf indifferent} between $s_{k}$ and $s_{l}$. If $i$ prefers $s_{k}$ to $s_{l}$, we write $s_k \succ_i s_l$, or simply $s_k \succ s_l$ if $i$ is unambiguous.

A {\bf priority structure} for a school $s \in S$, written ${\Pi}_s$,
is a tuple $(I_{1},\dots,I_{n})$ where the $I_{j}$'s form a partition of $I$ and every element of $I_{j}$ is preferred to every element of $I_{k}$ if and only if $j < k$.

A {\bf school choice problem} for a set $S$ of schools and $I$ of students is a triple $({\bf P},{\bf \Pi}, Q)$, where ${\bf P} = \{P_i : i \in I\}$ is a set of preference profiles for the students in $I$, ${\bf \Pi} = \{\Pi_s : s \in S\}$ is a set of priority structures for the schools in $S$, and $Q$ encodes the capacities of schools in $S$. 

Given a school choice problem $({\bf P},{\bf \Pi}, Q)$ for a set $S$ of schools and $I$ of students, we define a {\bf matching} $M: I\to I\times S$ to be a function that associates every student with exactly one school, or potentially no school at all. 
We write $M_i = s$ if $M(i) = (i,s)$.

A matching $M'$ {\bf (Pareto) dominates} $M$ if $M'_i \succ_i M_i$ for all $i$ and $M'_j \succ_j M_j$ is strict for some $j$. A {\bf (Pareto) efficient matching} is a matching that is not (Pareto) dominated.

If $\mathfrak{M} = \mathfrak{M}({\bf P},{\bf \Pi}, Q)$ denotes the set of all matchings for the school choice problem $({\bf P},{\bf \Pi}, Q)$, then a {\bf matching mechanism} $\mathcal{M}$ is defined to be a function:
\[ \mathcal{M} : ({\bf P},{\bf \Pi}, Q) \mapsto \mathcal{M}({\bf P},{\bf \Pi}, Q) \]
that takes a school choice problem $({\bf P},{\bf \Pi}, Q)$ and produces a matching $\mathcal{M}({\bf P},{\bf \Pi}, Q) \in \mathfrak{M}({\bf P},{\bf \Pi}, Q)$.

A mechanism is \textbf{strategyproof} if no student can ever receive a more preferred school by submitting falsified, as opposed to truthful, preferences.


\section{Cardinal utility transformations and evaluation criteria \\for matching mechanisms}
\label{S:NewCriteria}

In this section, we use cardinal utility transformations to translate ordinal student preferences into cardinal ones and determine a total cost for any given assignment. Thus the school choice problem becomes a cost minimization problem. At that point, a combinatorial optimization algorithm can be invoked to find the optimal (lowest cost) matching (and we will do so in \S\ref{S:HA}). 

The question of what criteria to use to judge the quality or desirability of a mechanism is a difficult one; for example, see \cite{Mc09} where McFadden argues that tolerance of behavioral faults should be included in such a list of criteria. The goal of school districts when designing a school choice policy is not singular (unlike, for instance, the case of auction design where our sole objective is to maximize selling price). Thus, it is especially important to define feasible and meaningful yardsticks by which to measure the success of a given school choice mechanism. One could define the best school mechanism as one that minimizes the government education funding budgets, produces the most elite students, or improves the conditions of less-advantaged students the most, etc. The current literature on school choice uses stability, (Pareto) efficiency, and strategyproofness as the standard criteria for evaluating the desirability of a given mechanism. In our work, we emphasize student preferences. Obviously, the ultimate design depends on how we define the objectives 
of the school choice problem.

\subsection{Cardinal utility transformations}
\label{SS:CUT}

Let $I$ and $S$ be a set of students and schools, respectively, and let ${\bf P}$ be a set of preference profiles for the students in $I$. Let $\phi(S) \subset \N$ denote the set $\cup_{i \in I} \phi_i(S)$. Then a {\bf cardinal utility transformation} for $(I, S, {\bf P})$ is a strictly increasing function $f: \phi(S) \rightarrow \R$. 
We can use any strictly increasing function $f: \N \rightarrow \R$ but it suffices for $f$ to be defined only on $\cup_{i \in I} \phi_i(S)$.

It should be automatically clear that there exist infinitely many choices of $f$. Some of these can indicate specific utility and fairness assumptions. For instance a concave $f$ can be used to model risk-averse preferences while a convex $f$ can be used to reflect risk-loving preferences. In our analysis, we use two specific choices of $f$ to illustrate the application of our mechanism. 

We introduce a preference reverence index in \S\S\ref{SS:Index} and identify it as a type of cost to be minimized. This corresponds to picking a specific example of the simplest, linear, case of a cardinal utility transformation: Let $f$ be a linear transformation of the form $f(\phi(S))=a(\phi(S))+b$ where $a,b \in \R$. Such a choice of $f$ reflects the assumption that students possess uniform utility gaps between schools. If we are only given a list of ordinal preferences, one might invoke the principle of insufficient reason to justify such an assumption. However, given the often sharp differences in desirability between schools, this assumption may not be realistic.

One might alternatively try to choose $f$ in the spirit of philosopher John Rawls' Difference Principle. In the context of school choice, this might be interpreted as maximizing the utility of the worst-off student -- in other words, the student receiving their least-preferred school receives as highly a preferred school as possible. Inspection shows that a suitable choice of $f$ is the exponential function $f(\phi(S))=N^{(\phi(S))}$ where $N$ is the total number of students. Under this choice of $f$, we see that assigning all students their $N-1$ ranked school yields the same disutility as assigning one student their $N$ ranked school, thus stipulating that any maximization of net utility must necessarily give the student who received their least-preferred school as preferred school as possible. We define a notion of rank minimality in \S\S\ref{SS:Rank} with which we aim to capture this principle.

Of course, there exist other choices of $f$ that can be said to reflect other assumptions. Thus, in the class of mechanisms we consider, the mechanism designer chooses an $f$ to reflect the nature of the population as a whole, a preferred sense of fairness or a desired interpretation of collective utility.
 It helps to recall that the only constraints on $f$ are that:

\begin{enumerate}

\item $f$ respects the ordering of student preferences (i.e. $f$ is strictly increasing on $\phi(S)$), and:
\item the mechanism designer chooses a unique $f$ to be applied uniformly over all student preferences. 

\end{enumerate}

In order to make sure the algorithm we want to use works properly, we will also require that 

\begin{enumerate}
\item[(3)] the range of $f$ fall within the nonnegative numbers. 
\end{enumerate}


\subsection{A preference reverence index}
\label{SS:Index}

Let $I$ be a nonempty set of students, and $S$ be a nonempty set of $m$ schools. Recall that for any $i\in I$, $s\in S$, $\phi_i(s)$ is $i$'s ranking of $s$ and for any matching $M : I \rightarrow I \times S$, $M_i = s$ denotes that $M(i) = (i,s)$.
Let $\mathfrak{M}$ be the set of matchings.
Define $\mu: \mathfrak{M} \to \N$ by
\[
\mu(M)=\sum_{i \in I}\left ( \phi_i(M_i)-1 \right ) .\]
For any given $M \in \mathfrak{M}$ we will call $\mu(M)$ the {\bf preference reverence index} of $M$ or simply the {\bf preference index}.

Since $\mathfrak{M}$ is finite, $\mu(\mathfrak{M})$ is finite and hence there exists some $M\in \mathfrak{M}$ such that $\mu(M)\le \mu(M')$ for all $M'\in \mathfrak{M}$. We will describe a method of seeking and locating such a minimal index matching in \S\ref{S:HA}. In \cite{AACGKZZha10proc} we discussed several properties of this index; readers interested in other efficiency metrics might also like to see \cite{BK10}. Here we will only point out that using the index as the cost to be minimized in a school choice problem corresponds to using the function $f_1(n) = n-1$ as the cardinality transformation function.

The preference index measures how well ordinal preferences are being honored as a whole. Each time we move to the next-best choice in a student's ranking, this counts as ``1 violation" of their preferences, and we then add up the number of times we make such violations. Thus, perhaps a more apt title would be ``preference dismissal index" since it is a measure of how little the preferences are being ``honored" or ``revered." It should be noted that the preference index assumes that it is the same to give one student their fifth choice and one their first choice (Total=4) as it is to give two students their third choice (Total=4).


\subsection{Rank minimality}
\label{SS:Rank}

Let $\mathcal{S} = ({\bf P},{\bf \Pi}, Q)$ be a given school choice problem for a set $S$ of schools and a set $I$ of students. We 
define the {\bf rank} of a matching $M : I \rightarrow I \times S$, $M \in \mathfrak{M}({\bf P},{\bf \Pi}, Q)$, to be the maximal rank assigned to individual students under that matching:
\[ \rank M =  \max \{\phi_i(M_i) \vert i \in I\}. \]
We say that a matching $M : I \rightarrow I \times S$, $M \in \mathfrak{M}({\bf P},{\bf \Pi}, Q)$, is \textbf{rank-minimal} if it has minimal rank, or in other words if it minimizes the maximal individual assigned ranks in the following sense: 
\[ \max \{\phi_i(M_i) \vert i \in I\} \le \max \{\phi_i(M^{\prime}_i) \vert i \in I\} \quad \textmd{ for all } M^{\prime} \in \mathfrak{M}({\bf P},{\bf \Pi}, Q).\]
In words, this means that the worst off student under $M$ is better off than the worst off student under any other $M^{\prime}$.

Given the above definition, we will call a matching mechanism $\mathcal{M}$ {\bf rank-minimal} if for any set $S$ of schools and a set $I$ of students given, $\mathcal{M}$ maps any school choice problem $\mathcal{S} = ({\bf P},{\bf \Pi}, Q)$ for $S$ and $I$ to a rank-minimal matching.

Before moving forward, we compare our definitions here with a related notion, that of \emph{rank maximality} (cf.\ \cite[Def.1.2]{IKMMP06}):
A matching is {\bf rank maximal} if the maximum possible number of applicants are matched to their first choice, and subject to that condition, the maximum possible number of applicants are matched to their second choice, and so on.

Though this may sound similar to our notion of rank minimality, in many cases we will see there are some subtle differences. For instance consider the following preference profile for a school choice problem with five students and five schools, each with capacity 1:
\[
\begin{array}{cc} i_{1}: s_{1} \succ s_{2} \succ s_{3} \succ s_{4} \succ s_{5} \\
i_{2}: s_{2}  \succ  s_{3}  \succ  s_{4}  \succ  s_{5}  \succ  s_{1} \\
i_{3}: s_{3} \succ  s_{4}  \succ  s_{5}  \succ  s_{1}  \succ  s_{2} \\
i_{4}: s_{4}  \succ  s_{5}  \succ  s_{1}  \succ  s_{2}  \succ  s_{3} \\
i_{5}: s_{1}  \succ  s_{2}  \succ  s_{5}  \succ  s_{3}  \succ s_{4}\end{array}
 \]
There are two matchings which assign the most number of students (four) to their top choice:
\[ \begin{pmatrix}
i_{1} & i_{2} & i_{3} & i_{4} & i_{5} \\
s_{1} &  s_{2} & s_{3} & s_{4} & s_{5}
\end{pmatrix} \qquad \textmd{ and } \qquad \begin{pmatrix}
i_{1} & i_{2} & i_{3} & i_{4} & i_{5} \\
s_{5} &  s_{2} & s_{3} & s_{4} & s_{1}
\end{pmatrix}\]
It is easy to see that the only rank maximal matching is the first one, which has rank 3 (the second has rank 5). However if we want a rank minimal matching, we can find one with rank 2: 
\[ \begin{pmatrix}
i_{1} & i_{2} & i_{3} & i_{4} & i_{5} \\
s_{2} &  s_{3} & s_{4} & s_{5} & s_{1}
\end{pmatrix}\]

In \S\S\ref{S:HA}
we see that we can use an exponential cardinal utility transformation to ensure that an optimization algorithm can yield a rank-minimal matching for a given school choice problem.

%

\section{Cost-minimizing Mechanisms for the School Choice Problem}
\label{S:HA}

In \S\ref{S:NewCriteria} we introduced the notion of cardinal utility transformations and suggested two natural evaluation criteria for the school choice problem that correspond to two specific types of cardinal utility transformations. In this section we describe a flexible assignment mechanism which can be geared specifically toward these notions (or others, depending on the choice of cardinal utility transformation). 

The mechanism described here is built upon a combinatorial optimization algorithm known as the Hungarian algorithm. The Hungarian algorithm is traditionally used to find the minimum cost matching in various {\bf min-cost max-flow problems} such as assigning individuals to tasks or determining minimum cost networks in travel \cite{Ku55}, \cite{Ku56}. We note that the algorithm can be processed in polynomial time \cite{Mu57}, hence the mechanism itself can be effectively implemented via a computer program.

As the purpose of the Hungarian algorithm is to find the minimum cost matching, the first step in adapting the algorithm to the school choice problem is to define the cost of any particular matching. Here is where the cardinal utility transformation comes in. For a matching $M : I \rightarrow I \times S$, $M \in \mathfrak{M}({\bf P},{\bf \Pi}, Q)$, and a cardinal utility transformation $f$, we will define the {\bf cost} of $M$ to be: 
\[ C_f(M) = \sum_{i \in I} f(\phi_i(M_i)). \] 

Note here that choosing the cardinal utility transformation $f_1(n) = n-1$ ensures that the cost $C_{f_1}(M)$ of a matching $M$ is precisely the preference reverence index of $M$ (cf.\ \S\S\ref{SS:Index}). This measures the cost in terms of the number and extent of preference violations. Alternatively if we use the exponential cardinal utility transformation $f_2(\phi(S))=N^{(\phi(S))}$ where $N$ is the total number of students, then we will see that minimizing the resultant cost will ensure that the outcome matching will be rank-minimal. 

In the rest of this section we focus on various aspects of using the Hungarian algorithm in the school choice problem. We first describe the standard Hungarian algorithm for assignment problems with cost determined by a given cardinal utility transformation $f$ (\S\S\ref{SS:Description}). 
We then explain how we adapt it further to work for the school choice problem (\S\S\ref{SS:Modifications}). 
Next we study efficiency properties of this ``Hungarian" school choice mechanism (\S\S\ref{SS:Properties}) and how one can strategize under this mechanism (\S\S\ref{SS:HASA}).
We discuss some implementation issues in \S\S\ref{SS:ImplementationMultMin}.
%


\subsection{Description}
\label{SS:Description}

In the following we present an elementary description
of the Hungarian algorithm within the context of school choice. Our presentation is equivalent to the original development in \cite{Ku55}. 
For a more sophisticated discussion including computational complexity concerns and an exhaustive investigation of the many variants of the method that lead to impressive complexity improvements, see \cite[Ch.17]{Sc03}.

Let $I$ and $S$ be a set of students and schools, respectively, and assume that a student preference profile ${\bf P}$ is given. Also assume that we have selected a cardinal utility transformation $f$ and thus defined the associated cost function $C_f$. 
Since the space $\mathfrak{M}$ of all matchings is finite, $C_f(\mathfrak{M}) = \{C_f(M) : M \in \mathfrak{M}\}$ is finite and therefore there exists some $M\in \mathfrak{M}$ such that $C_f(M)\le C_f(M')$ for all $ M'\in \mathfrak{M}$. We would like to find such a minimal cost matching. 

Let $A =(a_{jk})$ be the $n\times m$ matrix such that $a_{jk}=\phi_{i_{j}}(s_k)$, encoding student preferences. 
Use the cardinal utility transformation $f$ on each of the entries to obtain a {\bf cost matrix} $\mathfrak{C}_f$; we would like this to have no negative entries, so it is useful to insist that the range of $f$ fall within the nonnegative numbers. 
For now assume that $n=m$, i.e., there is an equal number of students and schools and each school has a capacity of one.

For example for the following preference profile of three students for three schools:
\[
\begin{array}{ccc}
i_1: s_1 \succ s_2 \succ s_3\\
i_2: s_3 \succ s_2 \succ s_1 \\
i_3: s_2 \succ s_3 \succ s_1
\end{array}\]
the matrix $A$ of preferences would be:
\[\begin{array}{c| ccc} & s_{1} & s_{2} & s_{3}\\ \hline i_{1}& 1 & 2& 3\\ \hline i_{2}& 3 & 2 & 1\\ \hline i_{3} & 3 & 1 & 2\end{array}\]
and the associated cost matrix using $f_1$ would be:
\[ \mathfrak{C}_f = \begin{pmatrix}0&1&2\\
2&1&0\\
2&0&1
\end{pmatrix}.\]
Now the assignment problem reduces to: \emph{Given a cost matrix $\mathfrak{C}_f$, pick one entry from each row and each column such that the sum of the selected entries is minimal}. The Hungarian algorithm can then be used to find a solution to this reformulated problem. 

In this specific case the algorithm will run as follows (cf.\ \cite[Figure 6.1]{RoAn84}):
\begin{enumerate}
\item  Subtract the smallest entry in each row from each entry in that row. [After this stage, all rows have at least one zero entry, and all matrix entries are nonnegative.]
\item  Subtract the smallest entry in each column from each entry in that column. [After this stage, all rows and columns have at least one zero entry, and matrix entries are still nonnegative.]
\item Draw lines through appropriate rows and columns so that all the zero entries of the cost matrix are covered and the minimum number of such lines is used. [There may be several ways to do this, but the main point is that it can be done.]
\item \textbf{Test for optimality:} If the number of covering lines is $n$, then an optimal assignment of all zeroes is possible and we are done; the algorithm terminates. Otherwise, such an assignment is not yet possible, and we proceed to Step 5. 
\item Determine the smallest entry not covered by any line, subtract it from all uncovered entries and add it to all entries covered by both a horizontal and a vertical line. Return to Step 3.
\end{enumerate}

When the algorithm terminates at some reiteration of Step 4,  we use the position of the zeros in the terminal matrix to determine the desired assignment which corresponds to the least cost matching \cite{Mu57}. Here, for instance, is the outcome of the Hungarian algorithm for the preference profile above:
\[\begin{array}{c| ccc} & s_{1} & s_{2} & s_{3}\\ \hline i_{1}& \fbox{1} & 2& 3\\ \hline i_{2}& 3 & 2 & \fbox{1}\\ \hline i_{3} & 3 & \fbox{1} & 2\end{array}\]

We note that Step 5 crucially depends on the following 
\begin{theorem}[Theorem 6.1 \cite{RoAn84}]
If a number is added to or subtracted from all of the entries of any row or column of a cost matrix, then an optimal (minimum cost) assignment for the resulting cost matrix is also an optimal assignment for the original cost matrix.
\end{theorem}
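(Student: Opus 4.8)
The plan is to show that the cost of \emph{every} assignment shifts by the same constant when we modify a single row or column, so the relative ordering of all assignments---and in particular which one is minimal---is preserved. First I would fix notation: let $\mathfrak{C}=(c_{jk})$ be the original $n\times n$ cost matrix, and suppose we add a constant $\delta\in\R$ to every entry of row $r$, producing a new matrix $\mathfrak{C}'=(c'_{jk})$ with $c'_{rk}=c_{rk}+\delta$ for all $k$ and $c'_{jk}=c_{jk}$ for $j\neq r$. (Subtraction is the case $\delta<0$, and the column case is entirely symmetric, so it suffices to treat this one.) Recall that an assignment selecting one entry from each row and column corresponds to a permutation $\sigma$ of $\{1,\dots,n\}$, with total cost $C(\sigma)=\sum_{j=1}^{n} c_{j\sigma(j)}$.

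The key observation is that any such permutation $\sigma$ selects \emph{exactly one} entry from row $r$, namely $c_{r\sigma(r)}$. Hence when I compute the cost of $\sigma$ under the new matrix, only that single term changes:
\[
C'(\sigma)=\sum_{j=1}^{n} c'_{j\sigma(j)} = \Bigl(\sum_{j\neq r} c_{j\sigma(j)}\Bigr) + \bigl(c_{r\sigma(r)}+\delta\bigr) = C(\sigma)+\delta.
\]
Crucially, $\delta$ does not depend on $\sigma$. Therefore for any two permutations $\sigma$ and $\tau$ we have $C'(\sigma)-C'(\tau)=C(\sigma)-C(\tau)$, so $C(\sigma)\le C(\tau)$ if and only if $C'(\sigma)\le C'(\tau)$. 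Taking $\tau$ to range over all permutations shows that $\sigma$ minimizes $C'$ exactly when it minimizes $C$; that is, the set of optimal assignments is identical for $\mathfrak{C}$ and $\mathfrak{C}'$.

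The column case follows by the transpose symmetry, since every permutation also picks exactly one entry from each fixed column $k$ (the entry $c_{\sigma^{-1}(k)\,k}$), so the same single-term argument applies verbatim. I expect no serious obstacle here: the entire content is the elementary fact that a permutation matrix meets each row and column in exactly one position, which forces the uniform $\delta$-shift. The only point requiring a modicum of care is to state the argument so that it manifestly covers both addition and subtraction and both rows and columns, rather than silently conflating them; once the permutation reformulation from \S\S\ref{SS:Description} is in place, the conclusion is immediate.
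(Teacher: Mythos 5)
Your proof is correct, and it is the standard argument: since any assignment (permutation) selects exactly one entry from the modified row or column, every assignment's cost shifts by the same constant $\delta$, so the ordering of assignments by cost --- and hence the set of minimizers --- is unchanged. The paper itself gives no proof of this statement, citing it directly from \cite{RoAn84}, so there is nothing to compare against; your argument is the canonical one and even establishes the slightly stronger conclusion that the optimal sets coincide exactly.
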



\subsection{A ``Hungarian" school choice mechanism}
\label{SS:Modifications}

In adapting the Hungarian algorithm to the most general version of the school choice problem, we must make three key modifications, in order to accommodate 1) differing school capacities, 2) differing numbers of students, and 3) incomplete preference profiles. We consider these individually below.

The construction of the algorithm as we presented it above requires as input an $n \times n$ matrix of non-negative numbers,
and it selects as output a unique entry in each row and each column. We must modify the algorithm to accommodate school capacities, unequal numbers of seats and students, as well as preferences containing different numbers of ranked schools.\footnote{There are various reasons why students may choose to list different numbers of schools. For instance they may decide to pursue other options, such as private schooling, unless they happen to get into their top choice.} In the following we will call our modified mechanism the {\it Utility-Based Hungarian Mechanism} and denote it by $\mathcal{HM}_f$, where $f$ is the chosen cardinal utility transformation.

Assume columns represent schools and rows represent students in our matrix. To express school capacities, we simply add an extra column for each available seat at a school and enter the same preferences for that column.\footnote{Thus the matrix could have some repeated entries. In fact students could even submit non-strict rankings. In this manner the Hungarian Mechanisms introduced in this paper allow students to display indifferences between various schools with no penalty. We will say a bit more on this in Section \ref{S:Conclusion}.} Thus, each column would represent a seat at a school, rather than an entire school. Then, if there are an unequal number of available seats and students (i.e. an unequal number of rows and columns), we add dummy rows or dummy columns, which represent nonexistent students or schools. Thus, if a ``dummy row student'' were assigned to an actual school, this would signify an open seat at that school, whereas if an actual student were assigned a ``dummy column school'' this would signify that that student remains unassigned by the mechanism.

The third modification addresses the problem of families submitting incomplete preference profiles. Some school districts 
might not require that all preference profiles include the same number of schools, and it is likely that preference profiles would not be required to include all possible school assignments. 
Regardless, in order to run the Hungarian algorithm, it is necessary to devise a way of completing student preferences such that each student preference list assigns a rank to each school or seat.

A potential solution is to use dummy variables to complete any missing entries in the matrix. However, this method may invite students to strategize.
Even without complete information, students might be motivated to strategize by only submitting their first choice school, thereby weighting this choice with dummy variables so that the algorithm is more likely to select it.

Alternatively we can fill out the remainder of a student preference profile with an equal ranking for all unranked schools. More specifically if a student's preference profile contains only $r$ ranks, then we assign the rank $r+1$ to all the remaining schools. This incentivizes the completion of preference lists, since otherwise all remaining schools will be treated equally.  For instance, if a family puts only their first choice, all other choices will be considered ``second"; therefore they may get a school which they consider terrible at low cost as measured by the mechanism.  Thus it would behoove them to fill out as many schools as possible if they had a genuine preference for one over another.  

Now, let us focus on what happens for specific choices of cardinal utility transformations. 
Since our cost in the example from \S\S\ref{SS:Description} was precisely the preference reverence index itself, we can see that the resultant matching there has the smallest preference index with respect to each student's preferences.\footnote{Note that our use of the definite article for ``the smallest preference index'' is in fact not justified. The output of the Hungarian algorithm is not necessarily unique; there are cases with multiple minima to the cost function to be optimized. This is not an unresolvable issue however, and we address it in detail in \S\S\ref{SS:ImplementationMultMin}. Till then we will assume that in case of multiple minima, our mechanism will choose randomly between them.}  
 
Similarly if we use $f_2$ as the cardinal utility transformation, we will obtain a rank-minimal outcome. Recall that in this case the cost of the assignment will be given by
\[ C_{f_2} = \sum_{i \in I} f_2\left (\phi_i(\mathcal{HM}_{f_2}(i))\right ) = \sum_{i \in I} N^{\phi_i(\mathcal{HM}_{f_2}(i))},\]
where $N$ is the number of students. Now we assume, to reach a contradiction, that $\mathcal{HM}_{f_2}$ is not rank minimal and assigns some student $i$ to her $j$\textsuperscript{th} ranked school when there is indeed a way to assign all students to schools which they all want more than they want their $j$th choice. 
This implies that the cost term corresponding to the student $i$ will be $N^j$ for $\mathcal{HM}_{f_2}$, while for a rank-minimal matching, each student contributes a term to the cost a number that is less than that. In fact if there is a rank-minimal way to assign students to schools, say via the matching $M^{\prime}$, the corresponding $f_2$-cost will be less than $N^j$ as a whole:
\[ N^{j} \geq \sum_{i \in I} N^{ \phi_i(M^{\prime}(s)}. \]
Thus, if the Utility-Based Hungarian Mechanism were to assign one student her $j$\textsuperscript{th} ranked school when it was possible to assign all students to more preferred (ranked less than $j$) schools, this would contradict the fact that the Hungarian algorithm matching minimizes the cost given by the sum of the selected entries of the matrix.  Therefore, the outcome of $\mathcal{HM}_{f_2}$ has to be rank-minimal.


\subsection{Pareto efficiency and the Utility-Based Hungarian Mechanism}
\label{SS:Properties}
The Utility-Based Hungarian Mechanism is efficient:

\begin{theorem}
\label{T:Pareto}
 If the Utility-Based Hungarian Mechanism outputs matching $M$ under some monotonically increasing cardinal utility transformation function $f$, then $M$ is Pareto efficient. \end{theorem}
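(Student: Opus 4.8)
The plan is to argue by contradiction, leveraging the single structural fact we are handed for free: the mechanism returns a matching of \emph{minimum} cost $C_f$ over all of $\mathfrak{M}$ (established in \S\S\ref{SS:Description}). Suppose $M = \mathcal{HM}_f(\mathbf{P},\mathbf{\Pi},Q)$ is the output but is \emph{not} Pareto efficient, so that some $M' \in \mathfrak{M}$ Pareto dominates it. Unwinding the definition of domination, every student weakly prefers her assignment under $M'$, i.e.\ $\phi_i(M'_i) \le \phi_i(M_i)$ for all $i \in I$, and at least one student $j$ strictly prefers hers, i.e.\ $\phi_j(M'_j) < \phi_j(M_j)$. (Here I am reading the definition of domination in the standard sense: weakly better for all students, strictly better for at least one.)

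With the ordinal data recast in terms of the ranking functions, I would then push everything through $f$. The essential hypothesis is that a cardinal utility transformation is \emph{strictly} increasing on $\phi(S)$ (cf.\ \S\S\ref{SS:CUT}); this is the sense in which $f$ is ``monotonically increasing'' in the statement. Strict monotonicity converts the rank inequalities above into cost inequalities: $f(\phi_i(M'_i)) \le f(\phi_i(M_i))$ for every $i$, and crucially $f(\phi_j(M'_j)) < f(\phi_j(M_j))$ for the distinguished index $j$. Summing the per-student contributions then gives
\[ C_f(M') = \sum_{i \in I} f(\phi_i(M'_i)) \;<\; \sum_{i \in I} f(\phi_i(M_i)) = C_f(M), \]
because one term decreases strictly while no term increases. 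This strict inequality contradicts the minimality of $C_f(M)$, so no dominating $M'$ can exist and $M$ is Pareto efficient.

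The hard part will not be the summation, which is routine, but rather making sure the strict improvement survives it: if $f$ were only \emph{weakly} monotone, the strict rank gain at student $j$ could collapse to $f(\phi_j(M'_j)) = f(\phi_j(M_j))$, the displayed inequality would weaken to ``$\le$,'' and the argument would fail. I would therefore emphasize explicitly that strict monotonicity of $f$ is exactly the lever that makes one-sided cost minimization imply Pareto efficiency. A secondary, purely bookkeeping point is that the proof must be read against the full model of \S\S\ref{SS:Modifications}, with duplicated columns for capacities, dummy rows/columns, and unranked schools completed to a common rank $r+1$: I would remark that since $f \circ \phi_i$ remains order-preserving after these completions and the comparison is carried out within the common matching space $\mathfrak{M}$, the inequality chain is valid for the general problem and not merely the square, equal-cardinality case. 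These are consistency checks rather than genuine obstacles; the substance of the proof is the one-line contradiction obtained from minimality together with strict monotonicity.
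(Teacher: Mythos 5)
Your proposal is correct and follows essentially the same route as the paper's proof: assume a Pareto-dominating $M'$ exists, use strict monotonicity of $f$ to conclude $C_f(M') < C_f(M)$, and contradict the cost-minimality of the mechanism's output. Your write-up is in fact somewhat more careful than the paper's (which loosely describes domination as ``two or more students prefer their matchings in $M'$'' rather than spelling out the weak/strict inequality structure and the summation), but the underlying argument is identical.
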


\begin{proof}
Assume, for the sake of contradiction, that $M$ is Pareto dominated by another matching $M'$. This necessarily means that two or more students prefer their matchings in $M'$ over $M$, while the matches for the rest of the students remain unchanged.  Since $f$ is strictly monotonically increasing, $M'$ must necessarily have a lower total ``cost" than $M$. But this contradicts our original hypothesis that the Utility-Based Hungarian Mechanism outputs $M$. 
\end{proof}

However, the converse is not necessarily true: 

\begin{theorem} 
\label{T:NotPar}
If $M$ is Pareto efficient, then there does not necessarily exist some monotonically increasing cardinal utility transformation function $f$ under which the Utility-Based Hungarian Mechanism yields $M$ as a solution. \end{theorem}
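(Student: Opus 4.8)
The statement is an existence (``not necessarily'') claim, so the plan is to exhibit a single explicit counterexample: a school choice problem together with a Pareto efficient matching $M$ that is never produced by $\mathcal{HM}_f$, no matter which strictly increasing $f$ is chosen.

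The guiding observation is that the cost $C_f(M) = \sum_{i\in I} f(\phi_i(M_i))$ depends only on the multiset of ranks that $M$ assigns, i.e.\ on the \emph{rank profile} $(n_1(M), n_2(M), \dots)$, where $n_k(M)$ counts the students receiving their $k$-th choice. Thus, to show $M$ is never optimal it suffices to produce one competitor matching $M^\ast \in \mathfrak{M}$ with $C_f(M^\ast) < C_f(M)$ for \emph{every} strictly increasing $f$. Expanding the cost difference by summation by parts, this universal inequality reduces to a first-order stochastic dominance comparison of the two profiles: $M^\ast$ should place at least as many students within their top $j$ choices as $M$ for every threshold $j$, with strict inequality somewhere. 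Whenever such an $M^\ast$ exists, $M$ cannot minimize $C_f$ for any admissible $f$, and hence $\mathcal{HM}_f$ never outputs it.

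The construction I would use has three students and three unit-capacity schools, with strict preferences chosen so that two students compete for the same top school. Taking
\[
\begin{array}{l}
 i_1:\ s_1 \succ s_2 \succ s_3,\\
 i_2:\ s_1 \succ s_3 \succ s_2,\\
 i_3:\ s_2 \succ s_1 \succ s_3,
\end{array}
\]
I would set $M = (M_{i_1}, M_{i_2}, M_{i_3}) = (s_3, s_1, s_2)$, giving the rank profile $\{1,1,3\}$, and the competitor $M^\ast = (s_1, s_3, s_2)$, giving $\{1,1,2\}$. Then $C_f(M) - C_f(M^\ast) = f(3) - f(2) > 0$ for every strictly increasing $f$, so $M$ is beaten by $M^\ast$ uniformly in $f$ and is never selected by $\mathcal{HM}_f$. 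Separately I would verify that $M$ is Pareto efficient: under $M$ the students $i_2$ and $i_3$ already hold their first choices, so any Pareto improvement would have to leave them at $s_1$ and $s_2$, forcing $i_1$ to remain at $s_3$; hence no matching Pareto dominates $M$.

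The delicate point — and the step I expect to require the most care — is sustaining the tension between the two requirements. A profile that beats $M$ for all $f$ is aggregate-better, so one must ensure this gain is achieved by \emph{reshuffling} which student receives which school rather than by improving everyone: in the example, passing from $M$ to $M^\ast$ promotes $i_1$ from rank $3$ to rank $1$ but demotes $i_2$ from rank $1$ to rank $2$, so $M^\ast$ does not Pareto dominate $M$ and the efficiency of $M$ survives. This is precisely what avoids any clash with Theorem~\ref{T:Pareto}: $M$ is efficient, yet because efficiency is a componentwise notion while $C_f$ is an aggregate, there is no strictly increasing $f$ for which $M$ is cost-minimal.
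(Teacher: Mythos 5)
Your proposal is correct and takes essentially the same route as the paper: both exhibit a three-student, three-school profile with a Pareto efficient matching that is uniformly beaten by a competitor, the cost gap reducing to $f(3)-f(2)>0$ for every strictly increasing $f$. Your example differs only in the specific preference profile (and you additionally spell out the Pareto-efficiency check, which the paper merely asserts), so there is no substantive difference in method.
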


\begin{proof}
We prove with a counterexample. Consider the preference profile:
\[ \begin{array}{ccc} i_{1}: s_{1} \succ s_{2} \succ s_{3}\\i_{2}: s_{3} \succ s_{1} \succ s_{2}\\i_{3}: s_{3}\succ s_{2} \succ s_{1} \end{array}\]
Now, consider two Pareto efficient matchings associated with this preference profile:

\begin{center}
Matching 1\[ \begin{pmatrix}i_{1}& i_{2}& i_{3}\\ s_{1}&s_{2}&s_{3}\end{pmatrix}.\] Matching 2:\[\begin{pmatrix}i_{1}& i_{2}& i_{3}\\ s_{1}&s_{3}&s_{2}\end{pmatrix}.\]
\end{center}

Matching 1 has total ``cost" of $f(1)+f(3)+f(1)$ while Matching 2 has total cost of $f(1)+f(1)+f(2)$.  Since f is strictly monotonically increasing, $f(2)<f(3)$, thus the total cost of Matching 1 is strictly greater than that of Matching 2. Thus, under no $f$ will the Utility-Based Hungarian Mechanism ever choose Matching 1.  
\end{proof}

The two theorems above tell us that any choice of $f$ will yield Pareto efficient matchings under the Hungarian algorithm; however, not all Pareto efficient matchings can be found by the Hungarian algorithm under a suitable choice of $f$ . Thus, the set of all matchings output by the Hungarian algorithm under all $f$ defines a \emph{proper subset} of the set of Pareto efficient matchings.

\subsection{The Utility-Based Hungarian Mechanism and Strategic Action}
\label{SS:HASA}
Next we carefully examine the performance of $\mathcal{HM}_f$ with respect to strategic action. We first begin by describing how one can strategize under the given mechanism.

Given a choice of $f$, a student's preference profile over $n$ schools can be viewed simply as a permutation of $n$ numbers as represented in the cost matrix for the Hungarian algorithm. The set of all such $n!$ permutations constitutes the set of all possible preferences (and therefore available actions) to each student. Thus, under complete information, a general (albeit extremely inefficient) heuristic for determining when and how to strategize would be to run the Hungarian algorithm under each of these $n!$ permutations. If, under any of these ``falsified" preference permutations, a student receives a preferred school (or, if there are multiple solutions, a better expected outcome), then they should strategize by changing their true preference permutation to that falsified preference permutation.

Given that it is unlikely that students would possess complete information on the preferences of all of their classmates, and furthermore that assignment outcomes may be sensitive to minor changes in a classmate's preferences, what is perhaps more meaningful in applied contexts is whether there exists a simple strategy that students can apply under incomplete information. 

In answering this question, we begin with a brief example to illustrate that seemingly ``counterintuitive" strategies can actually be quite effective under the Hungarian algorithm. Recall the strategy for which the Boston Mechanism has most often been criticized (see for instance \cite{APRS05}): rather than ``squander" your first choice on a popular school that you would be unlikely to receive, rank some of the popular schools as less preferred while ranking the slightly less popular (and therefore more achievable) schools more highly.  If a student believes she has little chance of receiving a popular school, such a strategy might be adopted in hopes of securing a spot in a less popular school. It turns out that this is not a viable strategy in the Hungarian setting, but a totally opposite and a somewhat counterintuitive method will work. 

Consider the realistic scenario in which there are sharp discrepancies between schools in terms of desirability. In this case, a student with a notion of the relative popularity of each school among the general public might approximate the preferences of his classmates as more or less homogenous. In the example we construct, student $i_1$ believes that $s_1$ and $s_2$ are the two most popular schools, while $s_3$ and $s_4$ are the two least popular. Thus, $i_1$ believes that his classmates will, in general, have preferences: $i: s_1 \succ s_2 \succ s_3 \succ s_4$. Student $i_1$'s own truthful preferences differ slightly from those of the general public: $i_1: s_2 \succ s_3 \succ s_4 \succ s_1$. When viewed in a matrix, we have:
\[ \begin{array}{c| cccc} & s_{1} & s_{2} & s_{3} & s_{4}  \\ \hline i_{1}& f(4) & f(1) & f(2) & f(3) \\ \hline i_{2}& f(1) & f(2) & f(3) & f(4)  \\ \hline i_{3} & f(1) & f(2) & f(3) & f(4) \\ \hline \vdots & \vdots & \vdots & \vdots & \vdots  \\ \hline i_{n} & f(1) & f(2) & f(3) & f(4)  \end{array}\]
Here, for any choice of $f$, there are multiple solutions to the Hungarian algorithm. Inspection shows that under no cost-minimizing solution will $i_1$ receive $s_1$ and (assuming a solution is chosen randomly), he has equal probability of receiving $s_2$, $s_3$, or $s_4$ (because the algorithm eventually maps to zero all matrix entries on the first row except the first one). Thus, given a choice of $f$, $i_1$ has expected outcome: $\tfrac{f(1)+f(2)+f(3)}{3}$.
 
Now, assume $i_1$ attempts to strategize by submitting falsified preferences represented in the matrix:
$$\begin{array}{c| cccc} & s_{1} & s_{2} & s_{3} & s_{4}  \\ \hline i_{1}& f(2) & f(1) & f(3) & f(4) \\ \hline i_{2}& f(1) & f(2) & f(3) & f(4)  \\ \hline i_{3} & f(1) & f(2) & f(3) & f(4) \\ \hline \vdots & \vdots & \vdots & \vdots & \vdots  \\ \hline i_{n} & f(1) & f(2) & f(3) & f(4)  \end{array}$$
Under these falsified preferences, inspection shows that in any cost-minimizing solution $i_1$ receives $s_2$. Since $i_1$ is now receiving his first choice with certainty, and recalling that his true cardinal utility for $s_2$ is given by $f(1)$, we see that his expected outcome is now $f(1)$. Compare this to $i_1$'s expected outcome under truthful preferences: $f(1)\leq \frac{f(1)+f(2)+f(3)}{3}$. Since lower expected outcomes correspond to higher expected utility, we see $i_1$ is better off under these falsified preferences. As $i_1$ receives his first choice school with certainty here, no other set of falsified preferences can achieve a strictly better outcome for $i_1$, so $i_1$'s optimal strategy is to submit the falsified preferences above.

Note that $i_1$ improves his expected outcome by putting the most popular school $s_1$ \emph{higher} on his list, up from his fourth choice to his second, while also ranking the two least popular schools, $s_3$ and $s_4$, \emph{lower} on his list, pushing them back from his second and third choice to his third and fourth. 
Clearly, strategizing in the context of the Utility-Based Hungarian Mechanism is different and perhaps more subtle than in the Boston Mechanism. 

To develop a more explicit, nuanced, and reliable strategy, we again consider the scenario in which there are sharp discrepancies between the desirabilities of certain schools. We assume that, with some sense of the relative popularity of schools in mind, student $i_1$ ascribes homogenous preferences to his classmates. We begin with the following:

\begin{lemma}
\label{L:Permutation} 
If students $i_1, i_2, \cdots, i_N$ have homogenous preferences over $N$ schools, that is, for all~$s \in S$, $\phi_{i_k}(s)=\phi_{i_j}(s)$ for $1 \le j\neq k \le N$, then the Utility-Based-Hungarian Mechanism $\mathcal{HM}_f$ for the cardinal utility transformation $f$ finds $N!$ cost-minimizing solutions, each having a total ``cost" or sum of assigned cardinal utility values of $\sum_{k=1}^{N} f(\phi_{i_k}(s_k))$.  \end{lemma}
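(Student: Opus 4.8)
The plan is to exploit the fact that homogeneous preferences force every row of the cost matrix to be identical, and then to observe that the total cost of any assignment is consequently independent of which permutation is selected. Since there is no uniqueness to break, this degeneracy is exactly what produces the full count of $N!$ optimal solutions.

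First I would write the cost matrix explicitly. Because $\phi_{i_k}(s)=\phi_{i_j}(s)$ for all students and all $s\in S$, there is a common ranking function $\phi$ with $\phi(s)=\phi_{i_k}(s)$, and so the cost matrix $\mathfrak{C}_f$ has $(j,k)$ entry equal to $f(\phi_{i_j}(s_k))=f(\phi(s_k))$, which does not depend on the row index $j$. Hence every row of $\mathfrak{C}_f$ is the common vector $(f(\phi(s_1)),\dots,f(\phi(s_N)))$.

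Next, I would recall that in the balanced, unit-capacity $N\times N$ setting a feasible assignment is exactly a permutation $\sigma$ of $\{1,\dots,N\}$, assigning student $i_j$ to school $s_{\sigma(j)}$, and that there are precisely $N!$ such assignments. The total cost of the assignment $\sigma$ is
\[
\sum_{j=1}^{N} f(\phi(s_{\sigma(j)})).
\]
Since $\sigma$ is a bijection of $\{1,\dots,N\}$, the collection of summands is merely a reordering of $\{f(\phi(s_k)) : 1 \le k \le N\}$, so this sum equals $\sum_{k=1}^{N} f(\phi(s_k))$ regardless of $\sigma$. In particular the cost takes the same constant value on every one of the $N!$ feasible assignments.

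It then follows at once that each of the $N!$ assignments attains the minimum of the cost function (they all attain the identical value, which is therefore also the minimum), so the optimal set located by $\mathcal{HM}_f$ is the entire collection of $N!$ permutations. The common optimal cost is $\sum_{k=1}^{N} f(\phi(s_k))$, and invoking homogeneity once more, $\phi(s_k)=\phi_{i_k}(s_k)$, which yields the claimed value $\sum_{k=1}^{N} f(\phi_{i_k}(s_k))$. I do not anticipate a genuine obstacle here; the entire substance is the rearrangement observation that summing a row-constant matrix over a bijection is permutation-invariant. The one point deserving explicit care is to state that in the basic equal-size, unit-capacity problem the feasible solutions are \emph{exactly} the $N!$ permutations, so that ``all permutations optimal'' delivers precisely $N!$ optimal solutions rather than merely a lower bound of $N!$.
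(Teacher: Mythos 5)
Your proof is correct and follows essentially the same route as the paper's: write down the cost matrix, observe that homogeneity makes every row identical, and conclude that all $N!$ permutation assignments share the common cost $\sum_{k=1}^{N} f(\phi_{i_k}(s_k))$. If anything, you are slightly more careful than the paper in spelling out the rearrangement step showing the cost is literally permutation-invariant (hence every feasible assignment is optimal), which the paper leaves implicit.
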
  

\begin{proof}
We can represent the situation as follows:
$$\begin{array}{c| cccc} & s_{1} & s_{2} & \dots & s_N \\ \hline i_{1}& f (\phi_{i_1}(s_1)) & f(\phi_{i_1}(s_2)) &\dots & f(\phi_{i_1}(s_N)) \\ \hline i_{2}&  f(\phi_{i_2}(s_1)) & f(\phi_{i_2}(s_2))& \dots & f(\phi_{i_2}(s_N))  \\ \hline \vdots & \vdots & \vdots & \ddots & \vdots \\ \hline i_{N} & f(\phi_{i_N}(s_1)) & f(\phi_{i_N}(s_2)) & \dots & f(\phi_{i_N}(s_N))  \end{array}$$

However, for all $s \in S$ we have $\phi_{i_k}(s)=\phi_{i_j}(s)$ for $1 \le j\neq k \le N$, so we can rewrite:
$$\begin{array}{c| cccc} & s_{1} & s_{2} & \dots & s_N \\ \hline i_{1}& f (\phi_{i_1}(s_1)) & f(\phi_{i_1}(s_2)) &\dots & f(\phi_{i_1}(s_N)) \\ \hline i_{2}&  f(\phi_{i_1}(s_1)) & f(\phi_{i_1}(s_2))& \dots & f(\phi_{i_1}(s_N))  \\ \hline \vdots & \vdots & \vdots & \ddots & \vdots \\ \hline i_{N} & f(\phi_{i_1}(s_1)) & f(\phi_{i_1}(s_2)) & \dots & f(\phi_{i_1}(s_N))  \end{array}$$

Since the Hungarian algorithm requires each row and column to have a unique assignment, each possible cost-minimizing matching can be thought of as some permutation of the $N$ real numbers in the set $\{f(\phi_{i_1}(s_1)), f(\phi_{i_1}(s_2)), \dots, f(\phi_{i_1}(s_N))\} $. There are $N!$ such permutations, so there are $N!$ such cost-minimizing matchings. Furthermore, the sum of assigned cardinal utility values is simply the sum of each number in this permutation, and is therefore given by $\sum_{k=1}^{N} f(\phi_{i_k}(s_k))$. 
\end{proof}

A natural next step is:

\begin{lemma}
\label{L:OtherPermutations}
If student $i_1$ has preferences given by his ranking function $\phi_{i_1}$ over $N$ schools and his $N-1$ classmates have homogenous preferences given by the ranking function $\phi_{-i_1}$, then for each possible assignment of school $s_k$ to $i_1$, there exist $(N-1)!$ associated matchings, each with equal total ``cost" of $f(\phi_{i_1}(s_k))+\Big(\sum_{j=1}^N f(\phi_{-i_1}(s_j))\Big)-f(\phi_{-i_1}(s_k))$.
\end{lemma}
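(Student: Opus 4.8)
The plan is to treat this as a direct corollary of Lemma~\ref{L:Permutation}, obtained by fixing the assignment of $i_1$ and reducing to the homogeneous situation already analyzed there. First I would fix a school $s_k$ and restrict attention to those matchings $M$ with $M_{i_1}=s_k$. Each such matching is determined by how the remaining $N-1$ schools $\{s_j : j\neq k\}$ are distributed among the remaining $N-1$ students $i_2,\dots,i_N$; since any such distribution is a bijection between two sets of size $N-1$, there are exactly $(N-1)!$ of them. This establishes the claimed count.

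Next I would compute the common cost by splitting it into the contribution $f(\phi_{i_1}(s_k))$ coming from $i_1$ and the contribution coming from the remaining students. Because $i_2,\dots,i_N$ all share the single ranking function $\phi_{-i_1}$, the reduced $(N-1)\times(N-1)$ cost submatrix obtained by deleting row $i_1$ and column $s_k$ has identical rows, which is precisely the homogeneous setting of Lemma~\ref{L:Permutation}. Applying that lemma to the reduced problem immediately gives that all $(N-1)!$ of these matchings share the same total cost over the students $i_2,\dots,i_N$.

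The key step is to identify this common value explicitly. Since every classmate uses $\phi_{-i_1}$, whichever of $i_2,\dots,i_N$ receives a given school $s_j$ contributes exactly $f(\phi_{-i_1}(s_j))$, independently of which student it is. Hence the sum of the classmate contributions equals the sum over the schools they collectively receive, namely $\sum_{j\neq k} f(\phi_{-i_1}(s_j))$, which I would rewrite as $\Big(\sum_{j=1}^N f(\phi_{-i_1}(s_j))\Big)-f(\phi_{-i_1}(s_k))$. Adding the $i_1$-term $f(\phi_{i_1}(s_k))$ then reproduces exactly the claimed total cost, finishing the argument.

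The main (and only mild) obstacle is the reindexing in this last step: one must argue that the classmate contributions depend only on the \emph{set} of schools received, which is fixed to $\{s_j : j\neq k\}$ once $i_1\mapsto s_k$ has been chosen, and not on the particular permutation realizing the assignment. This permutation-invariance is precisely what lets the sum over students be rewritten as a sum over schools and guarantees that all $(N-1)!$ matchings carry the same cost.
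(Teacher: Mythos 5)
Your proposal is correct and follows essentially the same route as the paper: fix the assignment $i_1\mapsto s_k$, observe that the remaining students form a homogeneous $(N-1)\times(N-1)$ subproblem, and invoke Lemma~\ref{L:Permutation} to get the count $(N-1)!$ and the common classmate cost $\sum_{j=1}^N f(\phi_{-i_1}(s_j))-f(\phi_{-i_1}(s_k))$. Your extra remark on permutation-invariance just makes explicit what the paper leaves implicit.
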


\begin{proof}
We can represent the situation as follows:
$$\begin{array}{c| cccc} & s_{1} & s_{2} & \dots & s_N \\ \hline i_{1}& f (\phi_{i_1}(s_1)) & f(\phi_{i_1}(s_2)) &\dots & f(\phi_{i_1}(s_N)) \\ \hline i_{2}&  f(\phi_{-i_1}(s_1)) & f(\phi_{-i_1}(s_2))& \dots & f(\phi_{-i_1}(s_N))  \\ \hline \vdots & \vdots & \vdots & \ddots & \vdots \\ \hline i_{N} & f(\phi_{-i_1}(s_1)) & f(\phi_{-i_1}(s_2)) & \dots & f(\phi_{-i_1}(s_N))  \end{array}$$
Assume student $i_1$ is assigned to school $s_k$. Then, since the Hungarian algorithm finds a unique assignment per row and per column, the rest of the students are matched in a ${(N-1) \times (N-1)}$ matrix. Therefore by Lemma \ref{L:Permutation}, there are $(N-1)!$ associated matchings for each assignment possibility  of $s_k$ to $i_1$. Furthermore, each of these matchings must have equal $f$-cost, since (again by Lemma \ref{L:Permutation}) the sum of the assigned cardinal utility values of these $(N-1)!$ classmates is~$\sum_{j=1}^N f(\phi_{-i_1}(s_j))-f(\phi_{-i_1}(s_k))$. Together with $i_1$'s cardinal utility value of $f(\phi_{i_1}(s_k))$, the total ``cost" of this matching is thus given by: 
\[ f(\phi_{i_1}(s_k))+\Big(\sum_{j=1}^N f(\phi_{-i_1}(s_j))\Big)-f(\phi_{-i_1}(s_k)),\] 
which completes the proof.\end{proof}

Now the next result follows immediately:

\begin{theorem} 
\label{T:Strategy}
If student $i_1$ has preferences given by his ranking function $\phi_{i_1}$ over $N$ schools and his $N-1$ classmates have homogenous preferences given by the ranking function $\phi_{-i_1}$, then under the Utility-Based Hungarian Mechanism $\mathcal{HM}_f$, $i_{1}$ will only receive school(s) $s_k$ where 
\[ f(\phi_{i_1}(s_k)) - f(\phi_{-i_{1}}(s_k)) \leq f(\phi_{i_1}(s_l)) - f(\phi_{-i_{1}}(s_l)) \textmd{ for all } l \neq k. \]
\end{theorem}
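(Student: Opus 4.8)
The plan is to reduce the theorem to a direct comparison of the $N$ possible outcomes for student $i_1$, relying entirely on the cost formula already established in Lemma~\ref{L:OtherPermutations}. Since $i_1$'s $N-1$ classmates share the homogenous ranking function $\phi_{-i_1}$, that lemma tells us that whenever $i_1$ is assigned a particular school $s_k$, \emph{every} completion of the matching on the remaining $(N-1)\times(N-1)$ block has the same total cost, namely
\[ f(\phi_{i_1}(s_k))+\Big(\sum_{j=1}^N f(\phi_{-i_1}(s_j))\Big)-f(\phi_{-i_1}(s_k)). \]
So the cost of any cost-minimizing matching depends on nothing except which school $i_1$ receives, and it suffices to understand how this quantity varies with $k$.

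The key observation is that the middle sum $C := \sum_{j=1}^N f(\phi_{-i_1}(s_j))$ does not depend on $k$. Hence the cost associated to assigning $i_1$ the school $s_k$ is $C + \big(f(\phi_{i_1}(s_k)) - f(\phi_{-i_1}(s_k))\big)$, and minimizing total cost over the choice of $s_k$ is \emph{equivalent} to minimizing the difference term $f(\phi_{i_1}(s_k)) - f(\phi_{-i_1}(s_k))$. I would then argue the ``only receive'' direction by contradiction-free comparison: suppose $\mathcal{HM}_f$ outputs a matching $M$ in which $i_1$ is assigned $s_k$. Because the mechanism returns a cost-minimizing matching, and because Lemma~\ref{L:OtherPermutations} guarantees that for \emph{each} school $s_l$ there genuinely exists a matching realizing the cost $C + f(\phi_{i_1}(s_l)) - f(\phi_{-i_1}(s_l))$, minimality of $\mathrm{cost}(M)$ forces
\[ f(\phi_{i_1}(s_k)) - f(\phi_{-i_1}(s_k)) \le f(\phi_{i_1}(s_l)) - f(\phi_{-i_1}(s_l)) \quad \textmd{for all } l \neq k, \]
which is exactly the claimed inequality.

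Honestly, there is no real obstacle here; as the paper states, the result ``follows immediately.'' The only point that deserves a word of care is the existence claim embedded in the comparison step: to conclude that $s_k$ minimizes the difference, one must know that a valid, achievable matching with $i_1 \mapsto s_l$ actually exists for every competing school $s_l$, so that its cost is a legitimate lower bound to compare against. This is precisely what Lemma~\ref{L:OtherPermutations} supplies (each assignment of $s_k$ to $i_1$ admits $(N-1)!$ completions), so no additional feasibility argument is needed and the proof is just the cancellation of the constant $C$ followed by the minimality of the Hungarian output.
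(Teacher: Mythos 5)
Your proposal is correct and is essentially the paper's own argument: both reduce the comparison to the per-school cost formula of Lemma \ref{L:OtherPermutations} and invoke cost-minimality of $\mathcal{HM}_f$, the only cosmetic difference being that the paper argues by contradiction (adding the constant $\sum_{j=1}^N f(\phi_{-i_1}(s_j))$ to both sides) while you cancel that constant directly. Your explicit remark that each competing assignment $i_1 \mapsto s_l$ is actually realizable is a nice touch, but it is already implicit in the paper's use of the same lemma.
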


\begin{proof}
Assume, for the sake of contradiction, that $\mathcal{HM}_f$ assigned $i_1$ to some school $s_k$ while there existed some school $s_l$ such that
$$f(\phi_{i_1}(s_k)) - f(\phi_{-i_{1}}(s_k)) > f(\phi_{i_1}(s_l))- f(\phi_{-i_{1}}(s_l)).$$
Adding $\Big(\sum_{j=1}^N f(\phi_{-i_1}(s_j))\Big)$ to both sides, we get (by Lemma \ref{L:OtherPermutations}) that the total cost of a matching in which $i_1$ receives $s_k$ is strictly more than one in which $i_1$ receives $s_l$. This contradicts the fact that the Utility-Based Hungarian Mechanism is cost-minimizing.
\end{proof}

To illustrate how a strategizing student might apply Theorem \ref{T:Strategy} effectively to strategize, let us revisit the example with which we began our examination of strategic action, setting $f$ to be the identity function $f(\phi(S))=\phi(S)$:
\[ \begin{array}{c| cccc} & s_{1} & s_{2} & s_{3} & s_{4}  \\ \hline i_{1}& 4 & 1 & 2 & 3 \\ \hline i_{2}& 1 & 2 & 3 & 4  \\ \hline i_{3} & 1 & 2 & 3 & 4 \\ \hline \vdots & \vdots & \vdots & \vdots & \vdots  \\ \hline i_{n} & 1 & 2 & 3 & 4  \end{array}\]
 Applying Theorem \ref{T:Strategy}, $i_1$ would first compute:
\begin{eqnarray*}
f(\phi_{i_1}(s_1))-f(\phi_{-i_1}(s_1))&=&(4-1)~=+3, \\ 
f(\phi_{i_1}(s_2))-f(\phi_{-i_1}(s_2))&=&(1-2)~=-1, \\ 
f(\phi_{i_1}(s_3))-f(\phi_{-i_1}(s_3))&=&(2-3)~=-1, \\ 
f(\phi_{i_1}(s_4))-f(\phi_{-i_1}(s_4))&=&(3-4)~=-1,
\end{eqnarray*}
and see that under no cost-minimizing matching can he receive $s_1$, and that he is equally likely to receive $s_2$, $s_3$, and $s_4$. In order to ensure receiving his first choice $s_2$, student $i_1$ must submit falsified preferences such that $f(\phi_{i_1}(s_2))-f(\phi_{-i_1}(s_2)) \leq f(\phi_{i_1}(s))-f(\phi_{-i_1}(s))$ for all $s \in S$. Indeed falsifying his preferences as $i_1: s_2 \succ s_1 \succ s_3 \succ s_4$ works. Under these falsified preferences, we have
\begin{eqnarray*}
f(\phi_{i_1}(s_1))-f(\phi_{-i_1}(s_1))&=&(2-1)=+1,\\
f(\phi_{i_1}(s_2))-f(\phi_{-i_1}(s_2))&=&(1-2)=-1,\\
f(\phi_{i_1}(s_3))-f(\phi_{-i_1}(s_3))&=&(3-3)=0,\\
f(\phi_{i_1}(s_3))-f(\phi_{-i_1}(s_3))&=&(4-4)=0.
\end{eqnarray*}

Thus we conclude that the Utility-Based Hungarian Mechanism is not immune to strategic action, even under incomplete information. In realistic scenarios, preferences will never be completely homogenous as we assumed. One can argue that the above strategy will become less reliable the more ``heterogenous" preferences become; the authors have not explored this direction.


\subsection{An implementation issue: multiple minima}
\label{SS:ImplementationMultMin}

In some instances the cost function we define might not correspond to a strict ordering. For instance with the cardinal utility transformation $f_1$, the cost function we obtain (the preference reverence index) may induce a non-strict ordering of the possible matchings, and a given preference profile might have multiple minimum preference index solutions. 
For example, consider the following preference profile:
\[
\begin{array}{cc} i_{1}:s_{1} \succ s_{2} \succ s_{3} \succ s_{4}\\ i_{2}: s_{4} \succ s_{2} \succ s_{1} \succ s_{3}\\ i_{3}:s_{3} \succ s_{1} \succ s_{4} \succ s_{2}\\ i_{4}:s_{3} \succ s_{4} \succ s_{2} \succ s_{1}\end{array}
\]
Here, 
there are three minimum cost (minimum preference index) matchings:
\[ \text{ Matching }\#1 \quad (C_{f_1}=2): \qquad
\begin{pmatrix}i_{1}& i_{2}&i_{3}&i_{4}\\ s_{1}&s_{2}&s_{3}&s_{4}\end{pmatrix}
\]
\[ \text{ Matching }\#2 \quad (C_{f_1}=2): \qquad
\begin{pmatrix}i_{1}& i_{2}&i_{3}&i_{4}\\ s_{2}&s_{4}&s_{1}&s_{3}\end{pmatrix}
\]
\[ \text{ Matching }\#3 \quad (C_{f_1}=2): \qquad
\begin{pmatrix}i_{1}& i_{2}&i_{3}&i_{4}\\ s_{1}&s_{4}&s_{3}&s_{2}\end{pmatrix}\]
The Utility-Based Hungarian Mechanism $\mathcal{HM}_{f_1}$ as defined above will output Matching 1. Is this a desirable situation?

The underlying theoretical problem of finding all possible minimum cost assignments by the Hungarian algorithm was addressed in \cite{FuTo92} (see \cite{Fu94} for an improvement on the main (polynomial time) algorithm used in \cite{FuTo92} and \cite{MaPl05} for more recent work in a similar vein).  Thus it is possible to find all minimum cost (minimum preference index) solutions using a mechanism adopting the Hungarian algorithm. This in turn raises the natural question:
How does one choose among multiple minima? We propose two possible approaches to deal with this issue.

\begin{enumerate}
\item If one intends to promote fairness by narrowing the discrepancies between the rankings of student assignments, then the matching with the minimum variance across individual student preference indices should be chosen. 
\item  If one intends to manage priorities and choose ``the most stable" matching, then the matching with the fewest number of students whose priority has been violated should be chosen.\footnote{This is not the same as looking at the total number of priority violations since a student could have his priority violated by multiple students. Once a student's priority has been violated, he can pursue legal action whether his priority is violated by one or by one hundred students. Thus, policy makers will probably be more concerned with how many students had their priorities violated rather than how many total priority violations there are.}
\end{enumerate}

Of course, one may use both of these in succession.

This incidentally addresses a possible concern about the Hungarian algorithm: its dependence on the order of the rows and the columns of the input matrix. Especially when there are multiple minimal index solutions, the order in which students or schools are listed may indeed affect the outcome, and the output matching may be different in different cases (though any two outcomes in such a scenario will have the same minimal cost). However if we modify our mechanism to look instead for all possible minimum cost matchings, this no longer creates a problem.
Thus, the order of the rows or columns ultimately does not matter because: (1) If there is a unique cost-minimizing solution, the order does not affect the outcome; and (2) if there are multiple cost-minimizing solutions, we can  find all of them using our mechanism, with adaptations a la \cite{FuTo92}.

The situation is somewhat different in the case of $\mathcal{HM}_{f_2}$ where the outcome matching is rank-minimal. More specifically, if we were to use \cite{Fu94} to find all minimum cost matchings with respect to the cardinal utility transformation $f_2$, we would not necessarily find all rank-minimal matchings. This is because there might exist two rank-minimal matchings for a given preference profile that have different costs. 
For example, consider the following preference profile:
\[
\begin{array}{cc} i_1: s_3  \succ  s_2  \succ  s_1 \\
i_2: s_2  \succ  s_3  \succ  s_1\\
i_3: s_2  \succ  s_1  \succ  s_3 \end{array}
\]
There are two matchings which both have the minimum rank-2 but have different costs:
\[ \text{ Matching 1 (Rank-2, } C_{f_2}=15): 
\begin{pmatrix}
i_1 & i_2 & i_3 \\ s_3 & s_2 & s_1
\end{pmatrix}
\]
\[ \text{ Matching 2 (Rank-2, } C_{f_2}=27): 
\begin{pmatrix}
i_1 & i_2 & i_3 \\ s_2 & s_3 & s_1
\end{pmatrix}
\]
Here, Matching 1 is the minimum $f_2$-cost matching that will be found by the Utility-Based Hungarian Mechanism when we use $f_2$ as the cardinal utility transformation, while Matching 2 is another matching with the same minimal rank. Notice that Matching 1 Pareto dominates Matching 2, so we can see that at least with regard to one other criterion, Matching 1 is measurably better.\footnote{In fact, all minimum $f_2$-cost matchings are not only rank-minimal, but are also Pareto efficient (cf.\ Theorem \ref{T:Pareto}), though
not all rank-minimal matchings will be Pareto efficient.}
One can nonetheless see a heuristic method to find all rank-minimal matchings: {\it List all possible matchings in order of increasing $f_2$-cost. Every matching listed above the first one that changes rank will be rank-minimal.}

In case of multiple minima for $f_2$-cost, we can again use \cite{Fu94} to find all minimum $f_2$-cost matchings, and employ analogues of the two aforementioned tie-breaking criteria to determine which matching to pick. We can also use cost functions like $C_{f_1}$ as additional criteria.\footnote{In the example given, $C_{f_1}=1$ for Matching 1 and $C_{f_1}=3$ for Matching 2.}


\section{Conclusion}
\label{S:Conclusion}

Current school choice mechanisms focus on balancing student preferences and school priorities, and the resulting matches sacrifice desirable characteristics.Ê Since a good public education is a scarce resource, there is no way to assign students to schools in such a way that all students attend top schools.Ê In our approach we chose to focus exclusively on student preferences.\footnote{Our focus on student preferences over school priorities is natural in the current climate in which the public debate over charter schools and school vouchers rages in an attempt to offer parents more control over their children's educational choices. 
See for instance several recent feature-length movies on school choice: Waiting for ``Superman", The Lottery, The Cartel Movie.}  As a result school choice became a one-sided matching problem. We next used the notion of a cardinal utility transformation to convert student preferences into cardinal utility ranks and thus translated the school choice problem into a cost minimization problem, where the cost depends on the choice of cardinal utility transformation $f$. Two particular instances of $f$ corresponded neatly to two natural criteria frequently used in combinatorial optimization problems.

The mechanism presented here was adapted from the Hungarian algorithm \cite{Ku55} which was developed as a combinatorial solution to the assignment problem.Ê Our modifications included a re-interpretation of assignments taking into account school capacities and required that we be allowed to ``complete'' submitted student preference profiles. 
With the introduction of this flexibility came the requirement that we determine a fair 
 way of completing student preference profiles.Ê In the profile completion process we sought to avoid confounding the problem of having non-participatory parents/adults costing unknowing and often powerless children a seat at the best possible school.Ê

An obvious weakness of our proposed mechanism is instability.Ê Since we ignored priorities as a whole, it was natural that the outcomes would suffer in terms of stability. We see a robust incorporation of school priories as an interesting direction for further investigation.

Another interesting direction for future work is in the incorporation of indifferences.Ê In particular, the Utility-Based Hungarian Mechanism affords students the opportunity to express indifferences. If a student is indifferent between several schools, the ranking number for these schools is simply repeated in the matrix. It seems on a cursory inspection that a dishonest representation of indifferences can only serve to harm a student's chance of receiving his preferred schools. There has been much work focusing on indifferences in school priorities (see for instance \cite{ErEr08}), but not as much has been done on student indifferences. We believe that this is an interesting thread to follow.

\section*{Acknowledgments}
This paper evolved from work the authors presented in a special paper session on computational social choice at a conference (ISAIM 2012 (International Symposium on Artificial Intelligence and Mathematics (ISAIM 2012), Fort Lauderdale, Florida, USA, January 9-11, 2012). A brief paper \cite{AACGKZZha10proc} was published as part of the proceedings of that conference. The current paper refers to some results in those proceedings, but is substantially revised and independent. Thus we hereby submit it for publication; the different title indicates the substantially different focus of the current paper.


\bibliographystyle{siam}   

\bibliography{SchoolChoiceBibliography}


\end{document}